\theoremstyle{plain}
   \newtheorem{theorem}{Theorem}[section]
   \newtheorem{lemma}[theorem]{Lemma}
   \newtheorem{corollary}[theorem]{Corollary}
   \newtheorem{problem}{Problem}
\theoremstyle{definition}
\theoremstyle{remark}
   \newtheorem{remark}[theorem]{Remark}
\def\kkk{\kern.2ex\mbox{\raise.5ex\hbox{{\rule{.35em}{.12ex}}}}\kern.2ex}
\newcommand{\LP}{\mathscr{L{\kkk}P}}
\newcommand {\bC} {\mathbb {C}}
\newcommand{\PP}{\mathscr{P}}
\newcommand{\LL}{\mathscr{L}}
\newcommand{\HH}{\mathscr{H}}
\newcommand {\bR} {\mathbb {R}}
\newcommand {\bN} {\mathbb {N}}
\renewcommand{\Im}{{\rm Im}}
\title[Multiplier Sequences for Laguerre Polynomials]
{A characterization of multiplier sequences for generalized Laguerre bases}
\author[P.~Br\"and\'en]{Petter Br\"and\'en}
\address{Department of Mathematics, Royal Institute of Technology, SE-100 44 Stockholm,
Sweden}
\email{pbranden@kth.se}
\thanks{The first author is a Royal Swedish Academy of Sciences Research Fellow
  supported by a grant from the Knut and Alice Wallenberg
  Foundation. The research is also supported by the G\"oran Gustafsson Foundation.}
\author[E.~Ottergren]{Elin Ottergren}
\address{Department of Mathematics,
            Stockholm University,
            S-10691, Stockholm, Sweden}
\email{ottergren@math.su.se}
\keywords{Multiplier sequences, generalized Laguerre polynomials, zeros of entire functions, linear operators on polynomial spaces}
\subjclass[2000]{26C10, 47B38, 30C15, 33C45}
\begin{document}
\thispagestyle{empty}

\numberwithin{equation}{section}

\begin{abstract}
We give a complete characterization of  multiplier sequences for generalized Laguerre bases. We also apply our methods to give a short proof of  the characterization of Hermite multiplier sequences achieved by Piotrowski. 
\end{abstract}

\maketitle

\section{Introduction}
In this paper we study linear operators on real polynomials that preserve the property of having only real zeros (we consider constant  polynomials as being real--rooted).  
P\'olya and Schur characterized such linear operators that act diagonally with respect to the standard basis of $\mathbb{R}[x]$, see \cite{PS}. A complete characterization of linear operators preserving real--rootedness was achieved only recently by  Borcea and the first author in \cite{BB1}. However, generalizations of the P\'olya--Schur theorem of the following form are still open in many important cases:

\begin{problem}[Problem 4.2 in \cite{BC}]\label{pr1}
Let $\PP= \{P_n(x)\}_{n=0}^\infty$ be a basis for $\bR[x]$.
For a sequence $\{\lambda_n\}_{n=0}^{\infty}$ of real numbers, define a linear  operator
$T:\mathbb{R}[x] \rightarrow \mathbb{R}[x]$ by $$T(P_n(x)) = \lambda_n
P_n(x), \quad \mbox{ for all } n \in \bN:=\{0,1,2,\ldots\}.$$ Characterize the sequences  $\{\lambda_n\}_{n=0}^\infty$ for which $T$ preserves real--rootedness. 
\end{problem}
We call such a sequence  a $\PP$-\emph{multiplier sequence}, while the term 
\emph{multiplier sequence} is reserved for the classical case $\PP=\{x^n\}_{n=0}^\infty$. 
The case of Problem \ref{pr1} when $\PP=\{x^n\}_{n=0}^\infty$ goes back to Laguerre and Jensen and was completely solved by P\'olya and Schur in \cite{PS}, see also \cite{CC1,L}. Tur\'an \cite{Tu} and subsequently Bleecker and Csordas \cite{BC} provided classes of multiplier sequences for the Hermite polynomials $\HH= \{H_n(x)\}_{n=0}^\infty$, while Piotrowski completely characterized $\HH$-multiplier sequences in \cite{P}. Recently partial results regarding multiplier sequences for the generalized Laguerre bases \cite{FP}, and for the Legendre bases \cite{BDFU}, were achieved.  

Recall that the \emph{(generalized) Laguerre  polynomials}, $\LL_\alpha= \{L_n^{(\alpha)}(x)\}_{n=0}^{\infty}$, are defined by  
\begin{equation}\label{lagdef}
L_n^{(\alpha)}(x)=\sum_{k=0}^{n}{n+\alpha \choose n-k}\frac{(-x)^k}{k!}, \quad \alpha > -1,  
\end{equation}
see \cite[p. 201]{Ra}. 
In this paper we give a complete characterization of $\LL_\alpha$-multiplier sequences for each $\alpha >-1$. We say that a  sequence $\{\lambda_n\}_{n=0}^\infty$ is trivial if there is a number $k \in \bN$ such that $\lambda_n=0$ for all $n \not \in \{k,k+1\}$. It is not hard to see that all trivial sequences are $\LL_\alpha$-multiplier sequences, see \cite[Proposition 2.1]{FP}. Hence it remains to characterize non-trivial $\LL_\alpha$-multiplier sequences, which is achieved by the following: 

\begin{theorem}\label{main}
Let $p(y)=\sum_{k=0}^{\infty}{k+\alpha \choose k} a_ky^k$ be a formal power series, where $\alpha > -1$, and let $\{\lambda_n\}_{n=0}^{\infty}$ be a non-trivial sequence defined by 
$$\lambda_n := \sum_{k=0}^{n}a_k{n \choose k}.$$
Then $\{\lambda_n\}_{n=0}^{\infty}$ is an $\LL_\alpha$-multiplier sequence if and only if $p(y)$ is a real-rooted polynomial with all its zeros contained in the interval $[-1,0].$
\end{theorem}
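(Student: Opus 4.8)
The plan is to reduce the statement, via the symbol calculus of \cite{BB1}, to a real-stability question about an explicit two-variable polynomial attached to $p$. Let $T\colon\bR[x]\to\bR[x]$ be the operator with $T(L_n^{(\alpha)})=\lambda_n L_n^{(\alpha)}$, and let $\mathcal A$ be the Laguerre operator, i.e.\ the second-order differential operator with $\mathcal A L_n^{(\alpha)}=nL_n^{(\alpha)}$ for all $n$ (namely $\mathcal A=x\frac{d}{dx}-x\frac{d^2}{dx^2}-(\alpha+1)\frac{d}{dx}$). Since $\lambda_n=\phi(n)$ for $\phi(w)=\sum_k a_k\binom{w}{k}$, we have $T=\phi(\mathcal A)$; note that $\phi$ and $p$ have the same support and $a_k=[y^k]p(y)\big/\binom{k+\alpha}{k}$. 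Applying $T$ coefficientwise to the generating function $G(x,t)=\sum_{n\ge0}L_n^{(\alpha)}(x)t^n=(1-t)^{-\alpha-1}\exp\bigl(-xt/(1-t)\bigr)$ yields the identity $\sum_{n\ge0}\lambda_n L_n^{(\alpha)}(x)t^n=\phi(t\,\partial_t)G(x,t)$.

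I would then compute the Borcea--Br\"and\'en symbol $G_T(x,y):=\sum_{k\ge0}T(x^k)\,\frac{(-y)^k}{k!}$. Expanding $e^{-xy}$ in the Laguerre basis amounts to the substitution $t=y/(1+y)$ in $G$, under which $G$ becomes $(1+y)^{\alpha+1}e^{-xy}$ while $t\,\partial_t=u(1+u)\partial_u$ with $u=y$; a short computation (using $t^k\partial_t^k=\prod_{j=0}^{k-1}(t\partial_t-j)$ and that $(1+u)^2\partial_u$ raises the exponent in $(1+u)^\beta e^{-xu}$ by one) then gives $G_T(x,y)=e^{-xy}\,\Theta_p(x,y)$, where
\[
\Theta_p(x,y)=\sum_{k\ge0}[y^k]p(y)\,\frac{(\alpha+1-x(1+y))^{(k)}}{(\alpha+1)^{(k)}}\,y^k,\qquad z^{(k)}:=z(z+1)\cdots(z+k-1),
\]
equivalently $\Theta_p(x,y)=\frac{1}{B(\alpha+1-X,X)}\int_0^1 s^{\alpha-X}(1-s)^{X-1}p(ys)\,ds$ with $X=x(1+y)$. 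In particular $\Theta_p$ is a polynomial if and only if $p$ is, $\Theta_p(0,y)=p(y)$, and for $p(y)=y+r$ one obtains $\Theta_p(x,y)=r+y-\frac{xy(1+y)}{\alpha+1}$.

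By \cite{BB1}, a non-trivial operator $T$ preserves real-rootedness if and only if $G_T$ belongs to the Laguerre--P\'olya class $\LP(\bR^2)$ (locally uniform limits of real stable polynomials), and since $e^{-xy}\in\LP(\bR^2)$ this is equivalent to real stability of $\Theta_p$. For sufficiency one has to show that $p(y)=c\prod_{j=1}^m(y+r_j)$ with all $r_j\in[0,1]$ makes $\Theta_p$ real stable. The base case $p(y)=y+r$ is a Hermite--Biehler computation: writing $y=M+i\epsilon$ near the real axis, the $x$-zero of the (linear-in-$x$) polynomial $\Theta_p$ remains in the closed lower half-plane precisely when $M^2+2rM+r\ge0$ for all real $M$, i.e.\ when $r(1-r)\ge0$, i.e.\ $r\in[0,1]$; at the endpoints $\Theta_p$ factors, as $y\bigl(1-\frac{x(1+y)}{\alpha+1}\bigr)$ for $r=0$ and $(1+y)\bigl(1-\frac{xy}{\alpha+1}\bigr)$ for $r=1$. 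The general case I would deduce from the integral representation of $\Theta_p$ --- using that $p(ys)$ does not vanish for $s\in(0,1]$ and $\Im y>0$ --- or by peeling off one linear factor at a time and controlling how $\Theta_p$ transforms under $p\mapsto(y+r)p$; this last transformation is the technical heart. For necessity one argues conversely: $G_T(0,y)=p(y)$ already forces $p\in\LP(\bR)$, a growth bound on $G_T$ forces $p$ to be an actual polynomial rather than a transcendental Laguerre--P\'olya function, and a zero of $p$ outside $[-1,0]$ produces a point $(x_0,y_0)$ with $\Im x_0,\Im y_0>0$ and $\Theta_p(x_0,y_0)=0$, so that $G_T\notin\LP(\bR^2)$ and $T$ fails to preserve real-rootedness.

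The step I expect to be the main obstacle is the global real-stability analysis of $\Theta_p$ in the general, non-linear case: checking that $\Theta_p$ has no zeros in the full product of open upper half-planes (not merely for $y$ near the real axis) and cutting the admissible interval down to exactly $[-1,0]$, with a treatment of the endpoints $r\in\{0,1\}$ uniform in $\alpha$. The integral representation and the closure of real-rootedness preservers under composition are the tools I would rely on, and I would not be surprised if an auxiliary change of variables decoupling the combination $X=x(1+y)$, or a contour deformation in the integral, were needed.
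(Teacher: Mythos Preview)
Your symbol computation contains an error that propagates through the rest of the plan. For $p(y)=y^2$ your formula gives $\Theta_p(x,y)=\dfrac{(\alpha+1-X)(\alpha+2-X)}{(\alpha+1)(\alpha+2)}\,y^2$ with $X=x(1+y)$, whereas the correct value (obtained e.g.\ from $G_T(x,y)e^{xy}=\sum_n a_n y^n L_n^{(\alpha)}(x(y+1))$ with $a_0=a_1=0$, $a_2=2/(\alpha+1)(\alpha+2)$) is $y^2-\dfrac{2xy^2(y+1)}{\alpha+1}+\dfrac{x^2y^2(y+1)^2}{(\alpha+1)(\alpha+2)}$; the linear-in-$x$ terms disagree. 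In general $L_n^{(\alpha)}(X)\neq (\alpha+1-X)^{(n)}/n!$ for $n\ge 2$, so neither your closed form for $\Theta_p$ nor the Beta-integral representation built on it is valid. The linear case $p(y)=y+r$ happens to coincide, which is why your check there succeeded.

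The deeper gap is in the sufficiency step you flag as ``the technical heart.'' The map $p\mapsto \Theta_p$ is \emph{linear} in $p$, not multiplicative, so ``peeling off one linear factor at a time'' cannot work directly: $\Theta_{(y+r)(y+s)}$ is not $\Theta_{y+r}\cdot\Theta_{y+s}$. The idea that actually closes the argument is different. With the correct symbol
\[
G_T(x,y)=e^{-xy}\sum_{k\ge 0}\frac{p^{(k)}(y)\,(-xy(y+1))^k}{(\alpha+1)\cdots(\alpha+k)\,k!},
\]
one first applies the nonnegative multiplier sequence $\{1/(\alpha+1)\cdots(\alpha+k)\}_{k\ge 0}$ in the $x$-variable (Laguerre), reducing the question to stability of $\sum_k p^{(k)}(y)\,(-xy(y+1))^k/k!$. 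This sum is nothing but the Taylor shift $p\bigl(y-xy(y+1)\bigr)$, which \emph{does} factor as $\prod_j\bigl(\theta_j+y-xy(y+1)\bigr)$, and each factor is stable because $-y(y+1)\ll y+\theta_j$ for $\theta_j\in[0,1]$. Without this multiplier-then-Taylor-shift trick, the product structure is invisible and your integral/peeling strategies do not go through. Your necessity sketch is also too thin: one must separately rule out $G_T(-x,y)\in\LP_2(\bR)$, and ``a zero of $p$ outside $[-1,0]$ produces a bad point'' needs an actual argument --- the paper does it by a Wronskian inequality coming from Hermite--Biehler applied to the first two $x$-coefficients of $G_T$, together with a degree count to force $p$ to be a polynomial rather than a transcendental $\LP$-function.
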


\begin{remark}
Note that Theorem \ref{main} implies that each non-trivial $\LL_{\alpha}$-multiplier sequence is a polynomial in $n$, i.e., there is a polynomial $P(x)$ such that $\lambda_n=P(n)$ for all $n \in \bN$. Hence the corresponding operator $T$ is a finite order differential operator. 
\end{remark}

\begin{remark}
We may express an arbitrary sequence $\{\lambda_n\}_{n=0}^{\infty}$ as
$\lambda_n = \sum_{k=0}^{n} a_k {n \choose k}$, where the sequence $\{a_k\}_{k=0}^\infty$ is defined by $a_k = \sum_{j=0}^k(-1)^{k-j}{k \choose j} \lambda_j$, for each $k \in \bN$. 
It follows (by elementary binomial identities) that the series  $p(y)$, defined in Theorem \ref{main}, may be expressed  in terms of the sequence $\{\lambda_n\}_{n=0}^{\infty}$ as the formal power series
\begin{equation}\label{strs}
p(y) = \frac 1 {(1+y)^{\alpha+1}}\sum_{n=0}^{\infty} \lambda_n {n+\alpha \choose n} \left(\frac {y}{1+y}\right)^n. 
\end{equation}
Hence $\{\lambda_n\}_{n=0}^{\infty}$  is a non-trivial $\LL_\alpha$-multiplier sequence if and only if \eqref{strs} is a real-rooted polynomial with all its zeros contained in the interval $[-1,0].$
\end{remark}

Our method of proving Theorem \ref{main} is applicable to other bases, and in Section~\ref{herms} we give a short proof of the characterization of Hermite-multiplier sequences  due to Piotrowski \cite[Theorem 152]{P}.

\section{Proof of Theorem \ref{main}}
The main tool used to prove Theorem \ref{main} is the characterization of linear preservers of real--rootedness given in \cite{BB1}, which we now describe. 
The \emph{Laguerre--P\'olya class}, $\LP_1(\bR)$,   consists of all real entire functions that are limits, uniformly on compact subsets of $\bC$, of real--rooted polynomials. Laguerre and P\'olya proved that an entire function $\Phi$ is in the Laguerre--P\'olya class if and only it may be expressed in the form 
\begin{equation}\Phi(x) = cx^ne^{\alpha x-\beta x^2}\prod_{k=1}^{\omega}\bigg(1+\frac{x}{x_k}\bigg)e^{-{x}/{x_k}}\label{LP}, \quad \omega \in \mathbb{N}\cup \{\infty\}, \end{equation} 
where $c,\alpha, x_k \in \mathbb{R}$ for all $k$, $\beta \geq 0$,  $n$ is a non-negative integer and 
$\sum_{k=1}^{\infty}x_k^{-2} < \infty.$ A multivariate polynomial $P\in \bC[x_1,\ldots, x_n]$ is called \emph{stable} if $P(x_1,\ldots, x_n)  \neq 0$ whenever $\Im(x_j)>0$ for all $1 \leq j \leq n$. Hence a real univariate polynomial is stable if and only if it is real--rooted. The \emph{Laguerre--P\'olya class} of real entire functions in $n$ variables, $\LP_n(\bR)$,   consists of all real entire functions that are limits, uniformly on compact subsets of $\bC^n$, of real stable polynomials. 

The \emph{symbol} of a linear operator $T: \mathbb{R}[x] \rightarrow \mathbb{R}[x]$  is the formal power series defined  by
$$G_T(x,y):= \sum_{n=0}^{\infty}\frac{(-1)^n T(x^{n})}{n!}y^n.$$

\begin{theorem}[Theorem 5 in \cite{BB1}]\label{BB}
A linear operator $T: \mathbb{R}[x] \rightarrow \mathbb{R}[x]$ preserves real--rootedness if and only if
 \begin{itemize}
\item[(1)] The rank of $T$ is at most two and $T$ is of the form 
$$T(P) = \alpha(P)Q + \beta(P)R,$$
where $\alpha,\beta: \mathbb{R}[x] \rightarrow \mathbb{R}$ are linear functionals and $Q+iR$ is a stable polynomial, or;
\item[(2)] $G_T(x,y)\in \LP_2(\bR)$, or;
\item[(3)] $G_T(-x,y)\in \LP_2(\bR)$. 
\end{itemize}
\end{theorem}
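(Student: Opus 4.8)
The plan is to prove the two implications separately, converting in each direction the univariate real--rootedness problem into a bivariate stability problem through the identity $G_T(x,y)=T[e^{-xy}]$, in which $T$ acts in the variable $x$ and $y$ is a parameter. The bounded-degree avatar of this identity is the degree-$n$ symbol
$$G_T^{(n)}(x,w):=T\big[(x-w)^n\big]=\sum_{k=0}^{n}\binom{n}{k}T(x^k)(-w)^{n-k},$$
which records the action of $T$ on every polynomial of degree at most $n$. Since $(1-xy/n)^n\to e^{-xy}$ locally uniformly and $T[(1-xy/n)^n]=\tfrac{(-y)^n}{n^n}G_T^{(n)}(x,n/y)$ is a rescaling of $G_T^{(n)}$, the images $T[(1-xy/n)^n]$ converge to $G_T(x,y)$, so Hurwitz's theorem will transfer stability from each finite level to the full symbol. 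Throughout I would work first with the complex notion of stability (no zeros in the open upper half-plane) and specialise to real--rootedness at the end, the real case being the restriction of the complex theory to real operators.

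For sufficiency, case (1) is immediate: if $Q+iR$ is stable then every real combination $aQ+bR$ is real--rooted, so $T(P)=\alpha(P)Q+\beta(P)R$ is real--rooted for all $P$. For cases (2) and (3) the governing principle is that an operator whose symbol is real stable is a stability preserver. Concretely, I would write $G_T(x,y)\in\LP_2(\bR)$ as a locally uniform limit of real stable polynomials and invoke the Lieb--Sokal lemma, which packages the fact that replacing a parameter by a differentiation in a stable polynomial preserves stability; reading off $T$ from $G_T$ through the symbol correspondence then yields that $T$ preserves real--rootedness, and the substitution $x\mapsto-x$ accounts for the opposite half-plane in case (3).

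The substantive direction is necessity. Suppose $T$ preserves real--rootedness and has rank at least three, so that the degenerate case (1) is excluded. The core step is the equivalence, at each fixed $n$, between preservation of real--rootedness on polynomials of degree at most $n$ and real stability of the bivariate symbol $G_T^{(n)}(x,w)$. I would establish this by polarisation: the Grace--Walsh--Szeg\H{o} coincidence theorem identifies the real--rooted polynomials $\prod_{i=1}^{n}(x-r_i)$ with the symmetric multiaffine real stable polynomials in $n$ auxiliary variables, and under this identification the requirement that $T$ send real--rooted polynomials to real--rooted polynomials becomes exactly the requirement that $G_T^{(n)}$ be real stable in $(x,w)$. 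With this finite-level equivalence in hand, letting $n\to\infty$ and invoking Hurwitz's theorem places the limiting symbol $G_T(x,y)$ in $\LP_2(\bR)$, and the sign of the half-plane selected by $T$ decides whether one lands in case (2) or, after $x\mapsto-x$, in case (3).

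The main obstacle is this finite-level equivalence, and within it the contrapositive: one must show that if $G_T^{(n)}$ is not real stable then $T$ genuinely fails to preserve real--rootedness. This requires taking a zero of $G_T^{(n)}$ in the forbidden region and ``propagating'' it, through the apolarity machinery underlying the Grace--Walsh--Szeg\H{o} theorem, into an explicit real--rooted test polynomial whose image under $T$ carries a non-real zero, a construction made delicate by the need to control zeros uniformly as the degree grows. Two further technical points demand care: the passage from the bounded-degree symbols to $G_T$ must be justified with the correct rescaling so that Hurwitz's theorem applies, and the rank at most $2$ operators of case (1) must be separated cleanly from those possessing a genuine Laguerre--P\'olya symbol, so that the three alternatives are exhaustive and the dichotomy between (2) and (3) is unambiguous.
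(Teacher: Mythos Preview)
This theorem is not proved in the present paper: it is quoted as Theorem~5 of \cite{BB1} and used as a black box, so there is no proof here against which to compare your proposal.

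For what it is worth, your outline is essentially the strategy of the original proof in \cite{BB1}: the finite-degree symbols $G_T^{(n)}(x,w)=T[(x-w)^n]$, the Grace--Walsh--Szeg\H{o} polarisation identifying real-rootedness preservation on degree $\le n$ with real stability of $G_T^{(n)}$, and the Hurwitz-type passage to the limit are the correct ingredients. Two places where your sketch is thinner than the actual argument: first, the split between alternatives (2) and (3) is not decided by a vague ``sign of the half-plane selected by $T$'' but requires a separate lemma to the effect that a real operator preserving real-rootedness either preserves (complex) stability or sends stable polynomials to polynomials with all zeros in the closed lower half-plane; second, the sufficiency direction for (2) and (3) via approximation and the Lieb--Sokal mechanism needs more than a one-line invocation, since one must control the approximants uniformly enough that the operator action commutes with the limit.
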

Theorem \ref{BB} suggests that we should find necessary and sufficient conditions for the symbol, $G_T(x,y)$,  of the operator given by $T(L_n^{(\alpha)}(x))= \lambda_nL_n^{(\alpha)}(x)$ to be in  $\LP_2(\bR)$. 
By giving an explicit expression for $G_T(x,y)$, the following Lemma is a step towards establishing such conditions. Though the result follows from 
\cite[Proposition 4.2]{FP}, for the sake of completeness we include a short proof here based on a well known identity for generalized Laguerre polynomials.

\begin{lemma}\label{symbexp1} 
Let $\{\lambda_n\}_{n=0}^\infty$ be a sequence of real numbers. The symbol of the operator $T: \mathbb{R}[x] \rightarrow \mathbb{R}[x]$ defined by $T(L_n^{(\alpha)}(x))=\lambda_nL_n^{(\alpha)}(x)$, for all $n \in \bN$, is given by
$$G_T(x,y)=  e^{-xy}\sum_{n=0}^{\infty}a_ny^nL_n^{(\alpha)}(xy+x),$$
where $$a_n = \sum_{k=0}^{n}(-1)^{n-k}{n \choose k}  \lambda_k, \quad n \in \bN.$$
\end{lemma}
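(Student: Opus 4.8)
The plan is to identify $G_T(x,y)$ with the result of applying $T$ to $e^{-xy}$ in the $x$‑variable, and then to expand $e^{-xy}$ in the Laguerre basis. Extend $T$ coefficientwise to power series $\sum_n f_n(x)y^n$ with $f_n\in\bR[x]$ by $T(\sum_n f_n(x)y^n)=\sum_n T(f_n(x))y^n$. Since $e^{-xy}=\sum_{n=0}^\infty\frac{(-1)^n}{n!}x^ny^n$, the definition of the symbol gives at once $G_T(x,y)=T(e^{-xy})$. I would then use the classical generating function $\sum_{k=0}^\infty L_k^{(\alpha)}(x)t^k=(1-t)^{-\alpha-1}e^{-xt/(1-t)}$ with the substitution $t=y/(1+y)$, for which $1-t=(1+y)^{-1}$ and $t/(1-t)=y$, to obtain the Laguerre expansion
$$e^{-xy}=\frac{1}{(1+y)^{\alpha+1}}\sum_{k=0}^\infty L_k^{(\alpha)}(x)\left(\frac{y}{1+y}\right)^k .$$
Each term on the right is a power series in $y$ of lowest order $y^k$, so only finitely many summands contribute to any fixed power of $y$; hence $T$ may be applied term by term, giving the intermediate formula
$$G_T(x,y)=\frac{1}{(1+y)^{\alpha+1}}\sum_{k=0}^\infty \lambda_k L_k^{(\alpha)}(x)\left(\frac{y}{1+y}\right)^k .$$

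Next I would insert the binomial inversion $\lambda_k=\sum_{j=0}^k\binom{k}{j}a_j$, which is exactly the inverse of the relation $a_n=\sum_{k=0}^n(-1)^{n-k}\binom{n}{k}\lambda_k$ defining $\{a_n\}$, and interchange the order of summation (legitimate for the same finiteness reason). Writing $k=j+m$ and using $\binom{j+m}{j}=\binom{j+m}{m}$ yields
$$G_T(x,y)=\frac{1}{(1+y)^{\alpha+1}}\sum_{j=0}^\infty a_j\left(\frac{y}{1+y}\right)^j\sum_{m=0}^\infty\binom{j+m}{m}L_{j+m}^{(\alpha)}(x)\left(\frac{y}{1+y}\right)^m .$$
The inner series is collapsed by the well-known identity (see, e.g., \cite{Ra})
$$\sum_{m=0}^\infty\binom{j+m}{m}L_{j+m}^{(\alpha)}(x)t^m=\frac{1}{(1-t)^{j+\alpha+1}}e^{-xt/(1-t)}L_j^{(\alpha)}\left(\frac{x}{1-t}\right),$$
whose $j=0$ instance is precisely the generating function used above. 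Taking again $t=y/(1+y)$, so that $1-t=(1+y)^{-1}$, $t/(1-t)=y$ and $x/(1-t)=x(1+y)=xy+x$, the inner series equals $(1+y)^{j+\alpha+1}e^{-xy}L_j^{(\alpha)}(xy+x)$; cancelling the powers of $1+y$ leaves
$$G_T(x,y)=e^{-xy}\sum_{j=0}^\infty a_j\,y^j L_j^{(\alpha)}(xy+x),$$
which is the asserted expression.

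There is essentially no analytic content here: every identity above is one of formal power series in $y$ over $\bR[x]$, and the interchanges of summation are rearrangements of finite sums once a power of $y$ is fixed, since the $k$‑th Laguerre term never contributes below $y^k$. One could equally well start from the inversion formula $x^n=n!\sum_{k=0}^n(-1)^k\binom{n+\alpha}{n-k}L_k^{(\alpha)}(x)$, apply $T$ to it, and resum the resulting double series with the generalized binomial series $\sum_{m\ge 0}\binom{\beta+m}{m}(-y)^m=(1+y)^{-\beta-1}$ to reach the same intermediate formula; the rest is unchanged. The only genuine step is to have the correct form of the generating-function identity for $\sum_m\binom{j+m}{m}L_{j+m}^{(\alpha)}(x)t^m$ at hand (with $L_j^{(\alpha)}(x/(1-t))$, not $L_j^{(\alpha)}(x)$, on the right); once it is in place, everything else is formal bookkeeping of signs and of the exponents of $1+y$.
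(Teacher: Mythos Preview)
Your proof is correct and follows essentially the same route as the paper: both expand $e^{-xy}$ in the Laguerre basis via the substitution $t=y/(1+y)$ in the generating function, decompose the eigenvalues as $\lambda_n=\sum_k\binom{n}{k}a_k$, and then collapse the resulting inner sum using Rainville's identity (9) on p.~211, which is exactly your formula for $\sum_m\binom{j+m}{m}L_{j+m}^{(\alpha)}(x)t^m$ after the same change of variables. The only cosmetic difference is that the paper packages the binomial decomposition through the differential operator $\delta=(x-\alpha-1)\,d/dx-x\,d^2/dx^2$ and the operators $S_k=\binom{\delta}{k}$, whereas you invoke binomial inversion of the eigenvalue sequence directly; the underlying computation is identical.
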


\begin{proof} Recall that the generalized  Laguerre polynomials satisfy the following differential equation (see \cite[p. 204]{Ra}): 
$$nL_n^{(\alpha)}(x) = (x-\alpha-1)\frac{d}{dx}L_n^{(\alpha)}(x)-x\frac{d^2}{dx^2}L_n^{(\alpha)}(x).$$
Consider the operator $\delta :=(x-\alpha-1)d/dx-xd^2/dx^2$
and let 
$$S_k := \frac{\delta(\delta-1)\cdots(\delta-k+1)}{k!}.$$
Then  
$S_kL_n^{(\alpha)}(x) = {n \choose k}L_n^{(\alpha)}(x),$ 
and letting $T$ be the operator corresponding to $\{\lambda_n\}_{n=0}^\infty$, we have 
$T = \sum_{k=0}^{\infty}a_k S_k.$ 
By the change of variables $y= t/(1-t)$ in the generating function for the generalized Laguerre polynomials:
$$
\frac   { e^{-xt/(1-t)} }   {(1-t)^{1+\alpha}}=\sum_{n=0}^{\infty}L_n^{(\alpha)}(x)t^n,
$$
see \cite[p. 202]{Ra}, yields
$$G_{S_k}(x,y)=S_k(e^{-xy}) = \sum_{n=0}^{\infty}{n \choose k} L_n^{(\alpha)}(x)y^n(1+y)^{-n-\alpha-1} .$$
On the other hand, with the same change of variables as above, identity $(9)$ on page 211 in \cite{Ra} states that 
$$\sum_{n=0}^{\infty}{n \choose k} L_{n}^{(\alpha)}(x)y^n(1+y)^{-n-1-\alpha}=  e^{-xy}y^kL_k^{(\alpha)}(xy+x).$$
Hence 
$$
G_T(x,y)= T(e^{-xy})= \sum_{k=0}^\infty a_kS_k(e^{-xy})= e^{-xy}\sum_{k=0}^\infty a_k y^kL_k^{(\alpha)}(xy+x).$$
\end{proof}
The explicit expression \eqref{lagdef} of the Laguerre polynomials now yields:
\begin{equation}
G_T(x,y) = e^{-xy}\sum_{n=0}^{\infty} a_ny^n \sum_{k=0}^{n} {n +\alpha \choose n-k}\frac{(-x(y+1))^k}{k!}.   \label{symbol1} \end{equation}
Setting $p(y)=\sum_{n=0}^{\infty}{n+\alpha \choose n} a_ny^n$ gives 
$$ \frac {p^{(k)}(y)}{(\alpha+1)\cdots (\alpha +k)}= \sum_{n=k}^{\infty}{n+\alpha \choose n-k}a_ny^{n-k}, $$
which together with changing the order of summation in \eqref{symbol1} yields the following consequence of Lemma \ref{symbexp1}:

\begin{corollary}\label{symbexp2}
The symbol of the operator $T: \mathbb{R}[x] \rightarrow \mathbb{R}[x]$ defined by $T(L_n^{(\alpha)}(x))=\lambda_nL_n^{(\alpha)}(x)$, for all $n \in \bN$,  is given by
$$G_T (x,y) = e^{-xy}\sum_{k=0}^{\infty} p^{(k)}(y)\frac{(-xy(y+1))^k}{(\alpha +1)\cdots (\alpha +k)k!},$$
where $p(y)$ is defined as in Theorem \ref{main}. 
\end{corollary}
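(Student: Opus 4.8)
The statement is precisely the bookkeeping consequence of Lemma~\ref{symbexp1} sketched in the paragraph that precedes it, so the plan is simply to carry that bookkeeping out carefully. First I would start from the expression \eqref{symbol1}, which results from substituting the explicit formula \eqref{lagdef} for $L_n^{(\alpha)}(xy+x)$ into the conclusion of Lemma~\ref{symbexp1} and writing $-x(y+1)$ for the argument that appears. Everything in sight is a formal power series in $y$ with polynomial coefficients in $x$, so there are no convergence issues and all rearrangements below are purely formal identities in $\bR[x][[y]]$.

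Next I would interchange the order of the double summation in \eqref{symbol1}, replacing $\sum_{n=0}^{\infty}\sum_{k=0}^{n}$ by $\sum_{k=0}^{\infty}\sum_{n=k}^{\infty}$ and pulling the factor $(-x(y+1))^k/k!$, which does not depend on $n$, out of the inner sum. This gives
$$G_T(x,y)=e^{-xy}\sum_{k=0}^{\infty}\frac{(-x(y+1))^k}{k!}\sum_{n=k}^{\infty}{n+\alpha \choose n-k}a_ny^n.$$
I would then factor $y^k$ out of the inner sum, so that it becomes $y^k\sum_{n=k}^{\infty}{n+\alpha \choose n-k}a_ny^{n-k}$, and invoke the identity
$$\frac{p^{(k)}(y)}{(\alpha+1)\cdots(\alpha+k)}=\sum_{n=k}^{\infty}{n+\alpha \choose n-k}a_ny^{n-k}$$
recorded just before the statement. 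Substituting this back and combining the accumulated powers of $x$, $y$, and $(y+1)$ into $(-xy(y+1))^k$ yields exactly the claimed formula
$$G_T(x,y)=e^{-xy}\sum_{k=0}^{\infty}p^{(k)}(y)\frac{(-xy(y+1))^k}{(\alpha+1)\cdots(\alpha+k)\,k!}.$$

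The only point needing a line of justification is the displayed identity for $p^{(k)}(y)$. Differentiating $p(y)=\sum_{n}{n+\alpha \choose n}a_ny^n$ term by term $k$ times produces the coefficient ${n+\alpha \choose n}\,n!/(n-k)!$ in front of $a_ny^{n-k}$, and using the Gamma-function form ${n+\alpha \choose n}=\Gamma(n+\alpha+1)/(n!\,\Gamma(\alpha+1))$ one checks that ${n+\alpha \choose n}\,n!/(n-k)!=(\alpha+1)\cdots(\alpha+k){n+\alpha \choose n-k}$, which is the identity. I do not expect any genuine obstacle here: the proof is entirely formal, and the only thing requiring care is keeping track of the three families of factors ($k!$, the rising factorial $(\alpha+1)\cdots(\alpha+k)$, and the binomial coefficient) when reindexing, together with the (harmless) interchange of summation, which is legitimate because we work in the ring of formal power series.
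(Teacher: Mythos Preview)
Your proposal is correct and follows exactly the approach the paper itself takes: substitute \eqref{lagdef} into Lemma~\ref{symbexp1} to obtain \eqref{symbol1}, interchange the order of summation, and identify the inner sum with $p^{(k)}(y)/((\alpha+1)\cdots(\alpha+k))$. Your added verification of the derivative identity via the Gamma-function form of the binomial coefficient is a bit more detailed than the paper's one-line assertion, but the argument is otherwise identical.
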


Before we proceed with the proof of Theorem \ref{main} let us collect some fundamental properties of multiplier sequences in a lemma for ease of reference: 
\begin{lemma}\label{fundament} ${ }$

\begin{enumerate}
\item (P\'olya and Schur, \cite{PS}). Let $\{\lambda_n\}_{n=0}^\infty$ be a sequence of real numbers, and define a formal power series by
$$
\Phi(x):=\sum_{k=0}^{\infty}\lambda_k \frac {x^k} {k!}.
$$
Then $\{\lambda_n\}_{n=0}^\infty$ is a multiplier sequence if and only if $\Phi(x)$ or $\Phi(-x)$ is an entire function that has  the form 
\begin{equation}\label{psrep}
 cx^ne^{sx}\prod_{k=1}^{\omega}\bigg(1+\frac x {x_k}\bigg), \quad  \omega \in \mathbb{N}\cup \{\infty \}, 
\end{equation}
where $s \geq 0$, $n \in \bN$,  $c \in \bR$, $x_k > 0$ for all $k$, and
$\sum_{k=0}^{\omega} x_k^{-1} < \infty$.
\item If  $\{\lambda_n\}_{n=0}^\infty$ is a multiplier sequence and $\lambda_k\lambda_\ell \neq 0$ for some $k<\ell$, then 
$\lambda_i \neq 0$, for all $k \leq i \leq \ell$.
\item  Let $\{\lambda_n\}_{n=0}^\infty$ be a sequence of real numbers and let $T$ be the corresponding diagonal operator. Then $\{\lambda_n\}_{n=0}^\infty$ is a trivial sequence (as defined in the introduction)  if and only if $T$ has rank at most two and $\{\lambda_n\}_{n=0}^\infty$ is a multiplier sequence. 
\end{enumerate}
\end{lemma}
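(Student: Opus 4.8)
The plan is to treat the three items more or less independently, since each is either a direct quotation from the literature or a short deduction from earlier material. For item (1) there is nothing to prove beyond citing P\'olya and Schur \cite{PS}: the stated form \eqref{psrep} is precisely their characterization of multiplier sequences in terms of the generating function $\Phi$, and one only needs to match the normalization so that the factors $(1+x/x_k)$ with $x_k>0$ and $\sum x_k^{-1}<\infty$ together with $s\ge 0$ correspond to a function of genus $0$ in the Laguerre--P\'olya class all of whose zeros are real and of the same sign. This is a special case of \eqref{LP} with $\beta=0$ and all $x_k$ of one sign.

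For item (2), I would argue directly from the representation in item (1). Suppose $\{\lambda_n\}_{n=0}^\infty$ is a multiplier sequence with $\lambda_k\lambda_\ell\neq 0$ for some $k<\ell$, and assume for contradiction that $\lambda_i=0$ for some $k<i<\ell$. Then $\Phi(x)=\sum_j \lambda_j x^j/j!$ (or $\Phi(-x)$) has the form \eqref{psrep}; write $\Phi(x)=c x^m e^{sx}\prod(1+x/x_j)$. The lowest-order nonzero coefficient of $\Phi$ occurs at degree $m$, so $m\le k$ and $\lambda_m\neq 0$. Now the key point is that a function of the form $e^{sx}\prod_{j=1}^{\omega}(1+x/x_j)$ with $s\ge 0$ and all $x_j>0$ has a power series with all coefficients of the same sign (indeed strictly positive, since every factor has nonnegative coefficients and the product of such series again has nonnegative coefficients, with no internal zeros because each partial product is a polynomial with only negative real zeros and hence, by Newton's inequalities or simply by positivity of elementary symmetric functions, strictly positive coefficients through its degree, and $e^{sx}$ contributes strictly positive coefficients in all degrees when $s>0$). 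Dividing out $cx^m$, we conclude the coefficients $\lambda_j/j!$ for $j\ge m$ are either all zero from some point on (the polynomial case, $s=0$ and $\omega<\infty$) or strictly nonzero and of constant sign; in either situation the nonvanishing indices form an interval starting at $m$, so $\lambda_k\neq 0$ and $\lambda_\ell\neq 0$ force $\lambda_i\neq 0$ for all $k\le i\le\ell$. I expect this positivity-of-coefficients argument to be the one place requiring a little care, mainly to handle the borderline polynomial case cleanly.

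For item (3), I would invoke Theorem \ref{BB} with $\PP=\{x^n\}_{n=0}^\infty$, i.e., with $T$ the diagonal operator $T(x^n)=\lambda_n x^n$. If $\{\lambda_n\}$ is a trivial sequence, then by definition $\lambda_n=0$ except for $n\in\{k,k+1\}$, so $T$ has rank at most two; and trivial sequences are multiplier sequences (this is the $\PP=\{x^n\}$ instance of the same easy fact cited for $\LL_\alpha$ in the introduction, or one checks directly that $T(p)=a x^k+b x^{k+1}$ times lower-order terms is real-rooted by inspection of the symbol), giving one direction. Conversely, suppose $T$ has rank at most two and $\{\lambda_n\}$ is a multiplier sequence. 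Rank at most two means at most two of the $\lambda_n$ are nonzero; if fewer than two are nonzero the sequence is trivially trivial, so assume exactly two, say $\lambda_k\neq 0\neq\lambda_\ell$ with $k<\ell$. By item (2) all intermediate $\lambda_i$ are nonzero, forcing $\ell=k+1$, which is exactly the definition of a trivial sequence. This closes the equivalence, and no further work is needed.
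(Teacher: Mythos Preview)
Your proposal is correct and follows exactly the line the paper indicates: the paper's entire proof is the single sentence ``Note that (3) follows easily from (2) which follows easily from (1),'' and you have supplied precisely those deductions---the positivity of the Taylor coefficients of $e^{sx}\prod(1+x/x_k)$ for (2), and the rank/interval argument for (3). One cosmetic slip: in your (3), the parenthetical ``$T(p)=ax^k+bx^{k+1}$ times lower-order terms is real-rooted by inspection of the symbol'' is garbled; you just mean $T(p)=ax^k+bx^{k+1}=x^k(a+bx)$ is visibly real-rooted.
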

Note that (3) follows easily from (2) which follows easily from (1). 

\subsection{Proof of Necessity} 

Before we proceed with the proof we  recall a version of the  Hermite--Biehler theorem, and the notions of \textit{interlacing zeros} and \textit{proper position}. Let $x_1 \leq x_2 \leq \dots \leq x_n$ and $y_1 \leq y_2 \leq \dots \leq y_m$ be the zeros of two real--rooted polynomials $f$ and $g$, where $\deg f = n,$ $\deg g = m$ and $|n-m| \leq 1$.  We say that the zeros of $f$ and $g$ interlace if they can be ordered so that $x_1 \leq y_1 \leq x_2 \leq y_2 \leq \cdots $ or  $y_1 \leq x_1 \leq y_2 \leq x_2 \leq \cdots$. By convention we also say that the ``zeros'' of any constant polynomial interlace the zeros of any polynomial of degree at most one, and that the zeros of the identically zero polynomial interlace the zeros of any real-rooted (or constant) polynomial. 
 If the zeros of two polynomials $f$ and $g$ interlace, then the \textit{Wronskian}
$$W[f,g] := f'g-fg'$$
is either non-negative or non-positive on the whole of $\mathbb{R}$. 

We say that an ordered pair $f,g$ of real--rooted polynomials are in \emph{proper position}, written $f \ll g$, if the zeros of $f$ and $g$ interlace and $W[f,g]\leq 0$ on $\bR$.

\begin{theorem}[Hermite--Biehler, see e.g. p. 197 in \cite{RS}] \label{HB}
Let $f, g \in \mathbb{R}[x]$, not both identically zero. Then  $f \ll g$ if and only if the polynomial
$g+if$
is stable.
\end{theorem}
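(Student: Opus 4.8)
The plan is to route both implications through the intermediate statement that the rational function $R:=g/f$ is a \emph{Nevanlinna function}, i.e.\ maps the open upper half-plane $H:=\{z\in\bC:\Im z>0\}$ into its closure. First I would dispose of the degenerate cases: if $f\equiv0$ then $g+if=g$, and by the conventions $f\ll g$ holds iff $g$ is real--rooted, iff $g$ is stable; the cases $g\equiv0$, and $f,g$ both constant, are equally direct. In all other cases, let $p$ be the greatest common factor of $f$ and $g$; in either direction $p$ is real--rooted: if $g+if$ is stable this is because $p\mid g+if$ forces the zeros of $p$ into the closed lower half-plane while $p\in\bR[x]$ forces them into conjugate pairs, and if $f\ll g$ it is because $f$ is then real--rooted and $p\mid f$. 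Since $g+if$ is stable iff $p$ is real--rooted and $(g/p)+i(f/p)$ is stable, while $f\ll g$ iff $p$ is real--rooted and $(f/p)\ll(g/p)$ (the standard behaviour of proper position under a common real--rooted factor), the claim reduces to the coprime case, and we may assume from now on that $f$ and $g$ are coprime and not both constant.

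The link between the two sides is the identity $W[f,g]=f'g-fg'=-f^2(g/f)'$, valid wherever $f\neq0$, so that $W[f,g]\leq0$ on $\bR$ says exactly that $R=g/f$ is non-decreasing on each component of $\bR\setminus f^{-1}(0)$. Together with the requirement that the zeros of $f$ and $g$ interlace, this is the classical description of rational Nevanlinna functions with real poles: for coprime non-constant $f,g$ one has $f\ll g$ if and only if $R$ has a partial--fraction expansion
$$R(z)=\beta z+\alpha-\sum_{k=1}^{n}\frac{\gamma_k}{z-\lambda_k},\qquad\beta\geq0,\ \gamma_k>0,$$
where $\lambda_1<\cdots<\lambda_n$ are the necessarily simple real zeros of $f$; in that case the zeros of $g$ are real and interlace the $\lambda_k$, and $|\deg f-\deg g|\leq1$ falls out of the behaviour of $R$ at $\infty$, and conversely.

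Granting this, necessity is immediate: if $f\ll g$ then $R$ is Nevanlinna, so $\Im R(z)\geq0$ for $z\in H$, in particular $R(z)\neq-i$; and $f$ is real--rooted, so $f(z)\neq0$ on $H$. Hence $g(z)+if(z)=f(z)\bigl(R(z)+i\bigr)\neq0$ on $H$, i.e.\ $g+if$ is stable. For sufficiency, suppose $h:=g+if$ is stable. By coprimality $h$ has no real zero (such a zero would be common to $f$ and $g$), so writing $h(z)=c\prod_k(z-a_k)$ we have $\Im a_k<0$ for all $k$. Put $\bar h(z):=\overline{h(\bar z)}=g(z)-if(z)=\bar c\prod_k(z-\bar a_k)$; then
$$w(z):=\frac{h(z)}{\bar h(z)}=\frac{c}{\bar c}\prod_k\frac{z-a_k}{z-\bar a_k},$$
so $|w(z)|<1$ whenever $\Im z<0$, because $|z-a_k|<|z-\bar a_k|$ for $\Im z<0$ and $\Im a_k<0$. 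A direct computation gives $g/f=i(w+1)/(w-1)$, and the Möbius transformation $w\mapsto i(w+1)/(w-1)$ carries the open unit disc onto the open lower half-plane; hence (noting $w\neq1$, so $f$ and $R$ have no pole, on $\{\Im z<0\}$) $R=g/f$ maps $\{\Im z<0\}$ into $\{\Im z<0\}$, and so, its coefficients being real, maps $H$ into $H$. Thus $R$ is Nevanlinna and $f\ll g$ by the previous paragraph.

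The genuine work is in the first paragraph's bookkeeping and in pinning down the ``classical description'' quoted in the second paragraph --- above all, reading off from the partial--fraction form that the zeros of $g$ interlace those of $f$ with degrees differing by at most one, and that a rational Nevanlinna function has only simple real poles with negative residues. Everything else is a short computation. (An alternative to this Nevanlinna--function route proceeds from the observation that for $h$ with all zeros in the closed lower half-plane the argument of $h(x)$ is weakly decreasing as $x$ runs over $\bR$, which forces the crossings of the two coordinate axes by the curve $x\mapsto(g(x),f(x))$ to alternate and thereby yields the interlacing and the sign of the Wronskian at once; it needs the same coprimality reduction.)
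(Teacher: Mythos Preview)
The paper does not prove Theorem~\ref{HB}; it is quoted as the classical Hermite--Biehler theorem with a reference to Rahman--Schmeisser, so there is no ``paper's own proof'' to compare against. Your proposal is a correct and standard route: reduce to the coprime case, translate both conditions into the statement that $R=g/f$ is a Nevanlinna (Pick) function, and pass between the partial--fraction representation of such functions and the interlacing/Wronskian description of proper position. The computations you give for the Blaschke--type quotient $w=h/\bar h$ and the Cayley transform $w\mapsto i(w+1)/(w-1)$ check out, and your identification of where the real content lies---namely the equivalence between the Nevanlinna partial--fraction form and the interlacing of zeros with $|\deg f-\deg g|\le 1$---is accurate. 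The only places to be a little more explicit in a written-up version are (i) the reduction ``$f\ll g$ iff $p$ real--rooted and $f/p\ll g/p$'', which deserves a line since the paper's conventions for interlacing with common zeros are somewhat informal, and (ii) the claim that a rational Pick function with real coefficients has only simple real poles with negative residues, which is classical but is the step doing the work in the sufficiency direction. Your parenthetical ``argument--monotonicity'' alternative is also a legitimate and common proof of the same theorem.
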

We may extend the the notion of proper position to $\LP_1(\bR)$ by setting $f \ll g$ if and only if $g+if \in \LP_1(\bC)$, where $\LP_1(\bC)$ is the \emph{complex Laguerre--P\'olya class} which is defined to be the set of entire functions that are limits, uniformly on compact subsets of $\mathbb{C}$, of stable polynomials in $\bC[x]$. In particular if $g+if \in \LP_1(\bC)$, where $f$ and $g$ are real entire functions, then $W[f,g](x) \leq 0$ for all $x \in \bR$.

Any $\LL_\alpha$-multiplier sequence is a multiplier sequence, see \cite[Lemma 157]{P}. 
Assume that $\{\lambda_n\}_{n=0}^\infty$ is a non-trivial $\LL_\alpha$-multiplier sequence, and let $T$ be the corresponding operator. Then, by Theorem \ref{BB},  $G_T(x,y) \in \LP_2(\bR)$ or $G_T(-x,y) \in \LP_2(\bR)$, since if $T$ has rank at most two then $\{\lambda_n\}_{n=0}^\infty$ is trivial by Lemma \ref{fundament} (3). Assume  $G_T(x,y) \in \LP_2(\bR)$ and expand its expression given  in Corollary \ref{symbexp2} in powers of $x$:
$$
G_T(x,y)= p(y)- x\left(yp(y)+\frac{y(y+1)}{1+\alpha}p'(y)\right)+\cdots.
$$

Non-negative multiplier sequences may be extended to act on functions of two variables by the rule $x^ky^\ell \mapsto \lambda_k x^ky^\ell$ for all $k, \ell \in \bN$. The class $\LP_2(\bR)$ is preserved under this action (see \cite[Lemma 3.7]{BB} and \cite{Br1}). Hence we may truncate the expression above by the multiplier sequence $\{1,0,0,...\}$ and obtain $p(y) \in \LP_1(\bR)$. 

If we instead truncate by the multiplier sequence $\{1,1,0,0,\dots\}$ we arrive at the bivariate expression
\begin{equation} Q(x,y) = p(y)-xq(y)= p(y)-x\Bigg(yp(y)+\frac{y(1+y)}{(1+\alpha)}p'(y)\Bigg) \label{expQ} \end{equation}
which belongs to $\LP_2(\bR)$.  Hence $iQ(i,y) = q(y)+ip(y) \in  \LP_1(\bC)$, and thus 
$$
W[p,q](y)  = -p(y)^2+\frac{y(y+1)}{1+\alpha}(p'(y)^2-p(y)p''(y))-\frac{2y+1}{1+\alpha}p(y)p'(y) \leq 0, 
$$
for all $y \in \bR$. We use the above inequality to prove that all zeros of $p$ are located in the interval $[-1,0]$.

First suppose that $y_0$ is a real and simple zero of $p(y)$. Evaluating $W[p,q]$ at  $y_0$ yields
$y_0(y_0+1)p'(y_0)^2 \leq 0$, 
which can only happen if $y_0\in [-1,0]$.

For multiple zeros we proceed as follows. Consider again $W[p,q](y)$ and a real zero, $y_0$, of $p(y)$ of multiplicity $M \geq 2$.  If $y_0 = 0$ or $y_0=-1$ there is nothing to prove, so assume otherwise. The multiplicity of $y_0$ will be 
$2M$ for $p^2,$ $2M-1$ for $pp'$ and $2M-2$ for $(p')^2$ and $pp''.$ If there is no cancellation the dominating term near $y_0$ of $W[p,q](y)$ is 
\begin{equation}\label{dom}
\frac{y(y+1)}{\alpha+1}(p'(y)^2-p(y)p''(y)).
\end{equation}
However, writing $p(y) = (y-y_0)^Ms(y)$ we obtain 
$$p'(y)^2-p(y)p''(y)= (y-y_0)^{2M-2}\big(Ms(y)^2+(s'(y)^2-s(y)s''(y))(y-y_0)^2 \big),$$
which proves that \eqref{dom} is the dominating term near $y_0$ and from which it follows that $y_0 \in [-1,0]$.

 We know that $p(y)$ is an entire function in $\LP_1(\bR)$ so it has the form \eqref{LP}, and we now show  that it is in fact a polynomial. 
Since its zeros lie in the interval $[-1,0]$, it can only have a finite number of zeros, that is, $p(y)= e^{ay-by^2}K(y)$, where $K(y)$ is a real--rooted polynomial with zeros only in $[-1,0]$, and $a, b \in \bR$ with $b \geq 0$. 
Recall the definition of $Q(x,y)$ in \eqref{expQ}. Then 
$Q(x,y) = e^{ay-by^2}(K(y) - xF(y))$, where 
$$F(y)=yK(y) + \frac{y(y+1)}{1+\alpha}\big((a-2by)K(y)+K'(y)\big).$$
The zeros of $F(y)$ and $K(y)$ interlace by Theorem \ref{HB} (set $x=i$). 
Notice that $\deg F \geq \deg K + 2$, unless $a=b=0$.  Hence $a=b=0$ and there is no exponential factor. 
This completes the proof that $p(y)$ is  a real--rooted polynomial with all its zeros contained in $[-1,0]$, and finishes the proof of necessity in the case when $G_T(x,y) \in \LP_2(\bR)$. It remains to prove that we cannot have $G_T(-x,y) \in \LP_2(\bR)$. 

Assume  $G_T(-x,y) \in \LP_2(\bR)$. Then proceeding as in the case when $G_T(x,y) \in \LP_2(\bR)$, we get $q(y) \ll p(y)$ , where $q(y)$ is as in \eqref{expQ}. Thus 
\begin{equation}\label{minus}
W[p,q](y)  = -p^2(y)+\frac{y(y+1)}{1+\alpha}(p'(y)^2-p(y)p''(y))-\frac{2y+1}{1+\alpha}p(y)p'(y) \geq 0, 
\end{equation}
for all $y \in \bR$. If $p(-1/2)\neq 0$, then Laguerre's inequality (see e.g. \cite[Corollary 3.7]{CC1}) implies that the middle term in \eqref{minus} is non-positive and thus $W[p,q](-1/2) <0$. Suppose $y=-1/2$ is a zero of $p(y)$ of multiplicity $M \geq 1$. Then, since 
$$
(y+1/2)  \frac {p'(y)}{p(y)} \approx M,  
 $$
near $y=-1/2$ we see that also the last term in \eqref{minus} is negative near $y=-1/2$. Hence we cannot have $G_T(-x,y) \in \LP_2(\bR)$.

\subsection{Proof of Sufficiency}
We now prove that the conditions on $p(y)$ in Theorem~\ref{main} imply  $G_T(x,y) \in \LP_2(\bR)$, which will then prove sufficiency by Theorem~\ref{BB}.  Assume that the zeros of 
$$p(y) = \sum_{k=0}^n{k+\alpha \choose k} a_ky^k = \prod_{j=1}^n(y+\theta_j)$$ are real and lie in $[-1,0]$, and consider again the symbol expressed as in Corollary~\ref{symbexp2}:
$$G_T(x,y)=e^{-xy}\sum_{k=0}^{n} p^{(k)}(y)\frac{(-xy(y+1))^k}{(\alpha +1)\cdots (\alpha +k)k!}.$$
Since $\{((\alpha+1)\cdots(\alpha+k))^{-1}\}_{k=0}^{\infty}$ is a non-negative multiplier sequence as proved already by Laguerre \cite{La}, and as such preserves $ \LP_2(\bR)$ when acting on $x$,  it is enough to prove  that
$$\sum_{k=0}^{n}p^{(k)}(y)\frac{(-xy(y+1))^k}{k!} $$
is a stable polynomial in two variables.  Now 
\begin{align*} \sum_{k=0}^{n}p^{(k)}(y)\frac{(-xy(y+1))^k}{k!} &= p(y-xy(y+1))\\
&= \prod_{j=1}^{n}(\theta_j+y-xy(y+1)), \end{align*}
where  $0 \leq \theta_j \leq 1$. Observe that  $-y(y+1) \ll y+\theta$ for all $0 \leq \theta \leq 1$ and thus, by e.g. \cite[Lemma 2.8]{BB2}, it follows that each factor is stable. This finishes the proof of Theorem \ref{main}. \hfill \ensuremath{\Box}

\section{Hermite multiplier sequences}\label{herms}

We will now apply our methods to give a short proof of the characterization of Hermite multiplier sequences achieved by Piotrowski \cite{P}. The Hermite polynomials, $\HH=\{H_n(x)\}_{n=0}^\infty$, may be defined by the generating function 
\begin{equation}\label{genher}
\exp(2xt-t^2) =
  \sum_{n=0}^{\infty}\frac{H_n(x)}{n!}t^n,
 \end{equation} 
 see \cite{Ra}. Since Hermite polynomials are even or odd it is easy to see that $\{\lambda_n\}_{n=0}^\infty$ is an $\HH$-multiplier sequence if and only if $\{(-1)^n\lambda_n\}_{n=0}^\infty$ is an $\HH$-multiplier sequence.   It is also plain to see that any trivial sequence is an  $\HH$-multiplier sequence, and that all $\HH$-multiplier sequences are multiplier sequences (see \cite[Theorem 158]{P}). Since the entries of  multiplier sequences either have the same sign or alternate in sign (by Lemma \ref{fundament} (1)) it remains to characterize non-negative and non-trivial Hermite multiplier sequences. In \cite{P} a generalization of P\'olya's curve theorem led to the following characterization, which we will now re-prove:

\begin{theorem}[Piotrowski, \cite{P}]\label{piot}
Let $\{\lambda_n\}_{n=0}^\infty$ be a non-trivial sequence of non-negative numbers. Then $\{\lambda_n\}_{n=0}^\infty$ is a Hermite multiplier sequence if and only if it is a (classical) multiplier sequence with $\lambda_n \leq \lambda_{n+1}$ for all $n\geq 0.$
\end{theorem}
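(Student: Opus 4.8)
The plan is to mirror the structure of the proof of Theorem~\ref{main}: reduce the statement to a condition on the bivariate symbol $G_T(x,y)$ via Theorem~\ref{BB}, obtain an explicit formula for the Hermite symbol analogous to Corollary~\ref{symbexp2}, and then extract the inequalities $\lambda_n \le \lambda_{n+1}$ from the Wronskian/proper-position machinery in one direction and recover $\LP_2(\bR)$-membership from a stable factorization in the other. The first step is to derive the symbol: using the differential equation $H_n'' - 2xH_n' + 2nH_n = 0$, the operator $\delta := \tfrac12 x \tfrac{d}{dx} - \tfrac14 \tfrac{d^2}{dx^2}$ (or a suitable normalization) acts diagonally on $H_n$ with eigenvalue $n$, so $T = \sum_k a_k S_k$ with $S_k = \delta(\delta-1)\cdots(\delta-k+1)/k!$ and $a_k = \sum_{j} (-1)^{k-j}\binom{k}{j}\lambda_j$. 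Applying $T$ to $e^{-xy}$ and using the generating function \eqref{genher} together with a classical Hermite summation identity (the analogue of identity $(9)$ on p.~211 of \cite{Ra}) should give a closed form; I expect something of the shape
$$G_T(x,y) = e^{-xy + cy^2}\, g\!\left(x - c' y\right)$$
for suitable constants $c,c'$, where $g(u) = \sum_k a_k u^k / k!$ is (up to normalization) the Jensen-type function $\Phi$ of the sequence $\{a_k\}$, which by Lemma~\ref{fundament}(1) is itself governed by the classical multiplier-sequence condition for $\{\lambda_n\}$.

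For necessity, assuming $\{\lambda_n\}$ is a non-trivial (hence non-rank-$\le 2$) Hermite multiplier sequence, Theorem~\ref{BB} forces $G_T(x,y)\in\LP_2(\bR)$ or $G_T(-x,y)\in\LP_2(\bR)$; by the even/odd symmetry remarked in the text we may assume the former. I would then truncate in $x$ using the multiplier sequences $\{1,0,0,\dots\}$ and $\{1,1,0,0,\dots\}$ — legitimate since non-negative multiplier sequences preserve $\LP_2(\bR)$ acting on one variable — to reduce to a bivariate polynomial $Q(x,y) = p(y) - x q(y)$ in $\LP_2(\bR)$, where now $p$ and $q$ are the appropriate coefficients of the Hermite symbol. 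The Hermite–Biehler theorem (Theorem~\ref{HB}) gives $q \ll p$, hence $W[p,q]\le 0$ on $\bR$; evaluating this Wronskian inequality, using Laguerre's inequality to control the $p'^2 - pp''$ term, should yield both that $\{\lambda_n\}$ is a classical multiplier sequence and that the entries are non-decreasing. The cleanest route to $\lambda_n \le \lambda_{n+1}$ is probably the concrete one: since the symbol essentially encodes $\Phi(u) = \sum \lambda_n u^n/n!$ with a Gaussian twist, and Gaussian multiplication sends $\LP_1(\bR)$ into itself, one reads off that $\Phi$ is in the Laguerre–Pólya class with non-negative coefficients, and the monotonicity $\lambda_n \le \lambda_{n+1}$ comes from the positivity constraint forced by the $y^2$ term in $G_T$ — I expect this to follow by specializing $Q(x,y)\in\LP_2(\bR)$ along a line and invoking a one-variable $\LP_1(\bR)$ closure property, exactly as $a=b=0$ was forced at the end of the proof of Theorem~\ref{main}.

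For sufficiency, suppose $\{\lambda_n\}$ is a classical multiplier sequence with $\lambda_n \le \lambda_{n+1}$. By Lemma~\ref{fundament}(1), $\Phi(u) = \sum \lambda_n u^n/n!$ (or $\Phi(-u)$) has the Laguerre–Pólya form \eqref{psrep}; the monotonicity should translate, via the known description of Hermite multiplier sequences' generating data, into the exact shape of $G_T$ needed. Concretely, I would write $G_T(x,y)$ in the product/exponential form coming from the closed formula above, peel off the non-negative multiplier sequence $\{(\text{normalizing constants})\}$ acting on $x$ (as with $\{((\alpha+1)\cdots(\alpha+k))^{-1}\}$ in the Laguerre case), and reduce to showing a polynomial of the form $e^{-xy}$ times $\Phi$ evaluated at an affine combination $x - (\text{something})\cdot y$ (or a product of stable linear-in-$x$ factors) lies in $\LP_2(\bR)$; each factor being stable follows from an elementary proper-position fact like $-y(y+1)\ll y+\theta$ was used before, here presumably $-y^2 \ll \text{const}$ or $y \ll \lambda_{n+1}/\lambda_n + y$-type interlacing driven precisely by $\lambda_n/\lambda_{n+1}\le 1$.

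\medskip

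\emph{Main obstacle.} The crux is pinning down the exact closed form of the Hermite symbol $G_T(x,y)$ and verifying which Hermite summation identity plays the role of \cite{Ra}, identity $(9)$, p.~211 — the algebra there is where the condition "$\lambda_n\le\lambda_{n+1}$" must emerge naturally rather than being imposed. Once the symbol is in a clean exponential-times-composition form, both directions should be short; getting there cleanly is the real work, and I would be prepared to instead argue semi-abstractly (extracting the leading two $x$-coefficients $p,q$ and grinding the Wronskian inequality, as in Theorem~\ref{main}) if no slick identity presents itself.
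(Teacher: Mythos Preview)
Your plan too closely mirrors the Laguerre argument, and that is exactly where it breaks. In the Laguerre case the entire content collapses to a \emph{polynomial} $p(y)$: non-trivial $\LL_\alpha$-multiplier sequences are polynomial in $n$, so the symbol has finitely many terms, the first two $x$-coefficients $p,q$ are polynomials, and Wronskian sign plus a degree count locate the zeros in $[-1,0]$. For Hermite none of this is available. Non-trivial $\HH$-multiplier sequences are \emph{not} polynomial in $n$ (e.g.\ $\lambda_n=c^n$ with $c\ge 1$ works), so the symbol is a genuine transcendental entire function in each variable. Your conjectured closed form $G_T(x,y)=e^{-xy+cy^2}g(x-c'y)$ does not exist in general: from \eqref{genher} one only gets
\[
G_T(x,y)=e^{y^2/4}\sum_{k\ge 0}\frac{\lambda_k H_k(x)(-y)^k}{2^k k!},
\]
and the inner sum does not factor through an affine substitution for arbitrary multiplier sequences. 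Consequently, truncating in $x$ gives entire functions $p(y),q(y)$, not polynomials; the degree argument that forced $a=b=0$ in the Laguerre proof has no analogue, and the Wronskian inequality cannot by itself produce the infinitely many inequalities $\lambda_n\le\lambda_{n+1}$.

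The paper's proof proceeds quite differently. After writing the symbol as above, one observes that applying the multiplier sequence $\{\lambda_k\}$ (in $y$) to $e^{-xy-y^2/4}\in\LP_2(\bR)$ gives $\sum_k \lambda_k H_k(x)(-y)^k/(2^kk!)\in\LP_2(\bR)$, which by Levin's two-variable representation (Theorem~\ref{levin}) equals $e^{-ax^2-by^2}g(x,y)$ with $g$ of genus $\le 1$ in each variable. Hence $G_T(x,y)=e^{-ax^2-(b-1/4)y^2}g(x,y)$, and the whole question reduces to deciding when $b\ge 1/4$. The constant $b$ is computed by an order/type calculation (Lemma~\ref{ordertypelemma}): if $\Phi(x)=\sum\lambda_k x^k/k!$ has the P\'olya--Schur form $cx^ne^{sx}\prod(1+x/x_k)$, then $b=s^2/4$. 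Finally the Craven--Csordas lemma (Lemma~\ref{CrCs}) says $s\ge 1$ if and only if $\lambda_0\le\lambda_1\le\cdots$. So the crucial ingredients you are missing are (i) the Levin representation to isolate the Gaussian factor, (ii) a type computation to identify $b$ with $s^2/4$, and (iii) the Craven--Csordas link between $s\ge 1$ and monotonicity. None of these is visible from the Wronskian/factorization template you propose.
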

Let $\{\lambda_n\}_{n=0}^\infty$ be a non-trivial and non-negative classical  multiplier sequence and let $T$ be the corresponding operator. 
Note that \eqref{genher} implies 
$$e^{-xy}= e^{{y^2}/{4}}\sum_{k=0}^{\infty}\frac{H_k(x)}{k!}\bigg(\frac{-y}{2}\bigg)^k,$$ 
and thus the symbol of $T$ has the form
$$G_T(x,y) = T(e^{-xy}) =  e^{{y^2/}{4}}\sum_{k=0}^{\infty}\frac{\lambda_k H_k(x)(-y)^k}{2^k k!}.$$ 
  Since $T$ is of rank greater than two, by Theorem \ref{BB} it remains to classify those non-negative sequences $\{\lambda_n\}_{n=0}^\infty$ for which 
$G_T(x,y) \in \LP_2(\bR)$ or $G_T(-x,y) \in \LP_2(\bR)$. First let us prove that $G_T(-x,y)$ is never in $\LP_2(\bR)$. Suppose that $G_T(-x,y) \in \LP_2(\bR)$ and let $M$ be the first index for which $\lambda_M \neq 0$. Then, since $e^{-y^2/4} \in \LP_2(\bR)$, 
\begin{equation}\label{expgt}
y^{-M}e^{-y^2/4}G_T(-x,y) = \frac{\lambda_M H_M(x)}{2^M M!}+ 
\frac{\lambda_{M+1} H_{M+1}(x)}{2^{M+1} (M+1)!}y + \cdots 
\in \LP_2(\bR). 
\end{equation}
Since  $\{\lambda_n\}_{n=0}^\infty$ is nonnegative, Lemma \ref{fundament} (2) implies $\lambda_M, \lambda_{M+1} >0$, and as in the previous section we may apply the multiplier sequence $\{1,1,0,\ldots\}$ (acting on $y$) to \eqref{expgt}  and conclude  
$$
\frac{\lambda_M H_M(x)}{2^M M!}+ 
\frac{\lambda_{M+1} H_{M+1}(x)}{2^{M+1} (M+1)!}y 
\in \LP_2(\bR). 
$$
Setting $y=i$ yields $H_{M+1}(x) \ll H_M(x)$ which is false (although $H_M(x) \ll H_{M+1}(x)$ is a standard fact about orthogonal polynomials). 

It remains to find necessary and sufficient conditions for $G_T(x,y)$ to belong to the Laguerre--P\'olya class. Note that 
\begin{equation}\label{h1}
\sum_{k=0}^{\infty}\frac{H_k(x)(-y)^k}{2^k k!}= e^{-xy}e^{-{y^2}/{4}} \in \LP_2(\bR).
\end{equation}
Hence for a non-negative  multiplier sequence $\{\lambda_n\}_{n=0}^{\infty}$,  
$$
\sum_{k=0}^{\infty}\frac{\lambda_kH_k(x)(-y)^k}{2^k k!} \in  \LP_2(\bR).
$$
Recall the representation \eqref{LP} of entire functions in $\LP_1(\bR)$. A similar representation holds for functions in $\LP_2(\bR)$  (see \cite[p. 370]{L}):

\begin{theorem}\label{levin}
Let $\mathbb{K}=\bR$ or $\bC$.  If $ f(x,y)$ is an entire function of two variables, then $f$ is in $\LP_2(\mathbb{K})$ if and only if $f$ has the  representation
$$f(x,y)=e^{-ax^2-by^2}f_1(x,y),$$
where $a$ and $b$ are non-negative numbers, and $f_1$ belongs to $\LP_2(\mathbb{K})$ and has genus at most one in each of its variables under the condition that the other variable is fixed in the open upper half-plane. 
\end{theorem}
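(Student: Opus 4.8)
The plan is to prove the two implications separately; the soft one is that the displayed factorization suffices for membership in $\LP_2(\mathbb{K})$, and the substance lies in the converse.

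For sufficiency, note that $e^{-ax^2}\in\LP_2(\mathbb{K})$ for every $a\ge 0$: the polynomials $\big(1-ax^2/N\big)^N$ are real--rooted, hence stable in $\bC[x]$ and, carrying no $y$, stable in $\bC[x,y]$, and they converge locally uniformly on $\bC^2$ to $e^{-ax^2}$; likewise $e^{-by^2}\in\LP_2(\mathbb{K})$. Since a product of stable polynomials is stable and a product of locally uniformly convergent sequences converges locally uniformly, $\LP_2(\mathbb{K})$ is closed under multiplication, and therefore $e^{-ax^2-by^2}f_1\in\LP_2(\mathbb{K})$ whenever $f_1\in\LP_2(\mathbb{K})$; the genus hypothesis on $f_1$ plays no role in this direction.

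For necessity, fix stable polynomials $P_N$, real when $\mathbb{K}=\bR$, with $P_N\to f$ locally uniformly on $\bC^2$. If $\Im y_0>0$ then stability of $P_N$ forces $P_N(x,y_0)\ne 0$ for $\Im x>0$, so $P_N(\cdot,y_0)$ is stable in $\bC[x]$; letting $N\to\infty$ and invoking Hurwitz's theorem, the fiber $x\mapsto f(x,y_0)$ lies in $\LP_1(\bC)$. The one--variable structure theorem for the complex Laguerre--P\'olya class then produces a real number $\ga(y_0)\ge 0$, namely the coefficient of $-x^2$ in the exponential factor of the Hadamard factorization of the fiber, such that $f(x,y_0)=e^{-\ga(y_0)x^2}g(x;y_0)$ with $g(\cdot;y_0)$ entire of genus $\le 1$ in $x$ (and, one checks, still in $\LP_1(\bC)$). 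When $\mathbb{K}=\bR$ one may instead restrict $y_0$ to a real value, using that a real stable polynomial stays real stable when one variable is set to a real constant; then $f(\cdot,y_0)\in\LP_1(\bR)$ and \eqref{LP} yields the same factor with $\ga(y_0)\ge 0$. Carrying out the construction symmetrically in $y$ yields a nonnegative $b(x_0)$ with $f(x_0,y)=e^{-b(x_0)y^2}h(y;x_0)$, $h(\cdot;x_0)$ of genus $\le 1$ in $y$.

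The crux, and the step I expect to be the main obstacle, is that $\ga(\cdot)$ and $b(\cdot)$ are constant. The idea is that $f$, being a locally uniform limit of stable polynomials, inherits growth control of Phragm\'en--Lindel\"of type --- for $y$ in any compact set, $f(\cdot,y)$ is of order at most two with type bounded in $y$ --- which bounds the fiberwise order--two type in $x$ above uniformly; an argument with normal families and Harnack's inequality then identifies $y\mapsto\ga(y)$ with a subharmonic function on $\bC$ that is bounded above, hence, being also bounded below, constant by Liouville's theorem, and symmetrically for $b(\cdot)$. (Heuristically, a non-constant $\ga$ would force, along a suitably chosen complex ray, growth faster than a limit of stable polynomials can sustain --- the same mechanism that keeps $e^{-x^2y^2}$ out of $\LP_2(\bR)$.) Writing $\ga(\cdot)\equiv a\ge 0$, $b(\cdot)\equiv b\ge 0$ and setting $f_1(x,y):=e^{ax^2+by^2}f(x,y)$, each $x$--fiber of $f_1$ over a point of the open upper half--plane is $f(\cdot,y_0)$ with its entire order--two Gaussian factor removed, hence of genus $\le 1$, and likewise for the $y$--fibers; moreover $f_1\in\LP_2(\mathbb{K})$, because removal of the maximal Gaussian factor preserves this class --- immediate from \eqref{LP} in one variable, and a closure lemma belonging to the same circle of ideas (approximation together with growth estimates) in two variables. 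Together with the sufficiency direction this gives the characterization; essentially all the difficulty is concentrated in the constancy of the fiberwise types and in the two--variable removal of the Gaussian factor, the rest being routine bookkeeping with Hadamard factorizations and with the one--variable Laguerre--P\'olya structure theory recalled above.
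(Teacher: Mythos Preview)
The paper does not prove this theorem; it is quoted as a known result from Levin's monograph (the citation \cite[p.~370]{L}). There is therefore no proof in the paper to compare your attempt against.

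On the merits of your sketch: the sufficiency direction is correct, and the overall shape of the necessity argument --- pass to fibers, invoke the one-variable structure theorem for $\LP_1(\bC)$ to extract a Gaussian factor $e^{-\gamma(y_0)x^2}$ with $\gamma(y_0)\ge 0$, show $\gamma$ is constant, and then check that the quotient $f_1$ has the required properties --- is the natural one. But the two steps you yourself flag as the ``crux'' are genuine gaps, not merely compressed arguments. Your constancy argument for $\gamma$ appeals to Liouville's theorem for subharmonic functions, yet $\gamma$ is so far only defined on the open upper half-plane, not on all of $\bC$, and you have not established that it is subharmonic or even continuous; the phrases ``Phragm\'en--Lindel\"of type growth control'' and ``normal families and Harnack'' are placeholders rather than proofs. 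Likewise, the claim that $f_1:=e^{ax^2+by^2}f$ remains in $\LP_2(\mathbb{K})$ is not automatic: multiplication by $e^{ax^2}$ with $a>0$ generally destroys membership (take $f=e^{-x^2}$ and $a=2$), so one must use that $a$ is exactly the maximal admissible value, and this is precisely the ``closure lemma'' you defer without proof. These are the points where the real work in Levin's treatment lies, and your outline does not yet supply it.
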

Thus we may write
\begin{equation}\label{h2}
\sum_{k=0}^{\infty}\frac{\lambda_kH_k(x)(-y)^k}{2^k k!} = e^{-ax^2-by^2}g(x,y)
\end{equation}
for some entire function $g(x,y) \in \LP_2(\bR)$ of genus at most $1$ in each variable under the condition that the other variable is fixed in the open upper half-plane.   Hence
\begin{equation}\label{Hsymb}
G_T(x,y) = e^{{y^2}/{4}}\sum_{k=0}^{\infty}\frac{\lambda_k H_k(x)(-y)^k}{2^k k!} = e^{-ax^2-(b-{1}/{4})y^2}g(x,y).  
\end{equation}
In light of Theorem \ref{levin} our task has reduced to establishing when $b \geq 1/4$.

To this end, recall that the \emph{order} $\rho$, and \emph{type} $\sigma$ of an entire function $f(x)$ may be defined as: 
$$
\rho := \limsup_{r \to \infty}\frac{\log \log M(r)}{\log r} \quad \mbox{ and } \quad \sigma:=\limsup_{r \to \infty}\frac{\log M(r)}{r^{\rho}}, 
$$
where $M(r) := \max_{|z|=r}|f(z)|$. In the definition of the type it is required that the order is finite and non-zero. In terms of its Taylor coefficients, $\{c_n\}_{n=0}^\infty$, the order and type of $f$ are given by 
\begin{equation}\label{ordertype}
\rho = \limsup_{n \to \infty}\frac{n \log n}{\log \frac{1}{|c_n|}} \quad \mbox{ and } \quad (\sigma e \rho)^{{1}/{\rho}} = \limsup_{n \to \infty}n^{{1}/{\rho}}|c_n|^{{1}/{n}},
\end{equation}
see e.g.  \cite[p. 4]{L}. If $c_n=0$, then the quotient is understood to be zero. 

\begin{lemma}\label{ordertypelemma}
 Let $\{\lambda_n\}_{n=0}^\infty$ be a non-negative multiplier sequence with exponential generating function expressed in the form \eqref{psrep}, and  let $g(x)=\sum_{n=0}^\infty c_n x^n$ be an entire function in $\LP_1(\bC)$ of order $2$ and  type $c$.  Then 
\begin{equation}\label{lac}
T(g)(x)=\sum_{n=0}^\infty \lambda_n c_n x^n = \exp(-cs^2x^2)f(x),
\end{equation}
where $f(x)\in \LP_1(\bC)$ has genus at most one. 
\end{lemma}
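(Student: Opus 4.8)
The plan is to extract the conclusion from the growth formulas \eqref{ordertype} together with the structure theory of the Laguerre--P\'olya class. Write the exponential generating function as in \eqref{psrep}, say $\Phi(x):=\sum_{n\ge 0}\lambda_nx^n/n!=\gamma x^m e^{sx}\prod_{k=1}^{\omega}(1+x/x_k)$ with $\gamma>0$, $m\in\bN$, $s\ge0$, $x_k>0$, $\sum_k x_k^{-1}<\infty$ (the case $\Phi\equiv0$ being immediate), and let $c$ denote the type of $g$. The first step is to check that $T(g)\in\LP_1(\bC)$. Writing $g=A+iB$ with $A,B\in\LP_1(\bR)$ and $B\ll A$, the pencil $A(x)+wB(x)$ lies in $\LP_2(\bR)$; since $\{\lambda_n\}$ is non-negative it acts on the first variable preserving $\LP_2(\bR)$, so $T(A)(x)+wT(B)(x)\in\LP_2(\bR)$, and setting $w=i$ gives $T(g)=T(A)+iT(B)\in\LP_1(\bC)$. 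For non-polynomial $g$ one should additionally invoke the known preservation of $\LP_1(\bC)$ by non-negative multiplier sequences, or approximate $g$ by stable polynomials and pass to the limit using the bound $\lambda_n\le C_\varepsilon(s+\varepsilon)^n$ obtained below.

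Next I would locate the order and type of $T(g)(x)=\sum_n\lambda_nc_nx^n$. Since $\prod_k(1+x/x_k)$ has genus $0$, $\Phi$ has order $\le1$ and exponential type $\le s$; Cauchy's estimate on $|x|=n/(s+\varepsilon)$ combined with Stirling gives $\lambda_n\le C_\varepsilon(s+\varepsilon)^n$, hence $\limsup_n\lambda_n^{1/n}\le s$. For the reverse inequality, $\lambda_n/n!=[x^n]\Phi=\gamma\,[x^{n-m}]\bigl(e^{sx}\prod_k(1+x/x_k)\bigr)$, and every Taylor coefficient of $e^{sx}\prod_k(1+x/x_k)$ is non-negative with constant term $1$, so $\lambda_n/n!\ge\gamma s^{\,n-m}/(n-m)!$ and thus $\lambda_n^{1/n}\ge\gamma^{1/n}s^{(n-m)/n}\to s$ when $s>0$; when $s=0$ the upper bound already gives $\lambda_n^{1/n}\to0$. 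So $\lambda_n^{1/n}\to s$ in all cases. Therefore $n^{1/2}|\lambda_nc_n|^{1/n}=(s+o(1))\,n^{1/2}|c_n|^{1/n}$, and as $g$ has order $2$ and type $c$, the second formula in \eqref{ordertype} yields $\limsup_n n^{1/2}|c_n|^{1/n}=(2ec)^{1/2}$, hence $\limsup_n n^{1/2}|\lambda_nc_n|^{1/n}=s(2ec)^{1/2}$.

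Using $\log(1/|\lambda_nc_n|)\ge\log(1/|c_n|)-n\log(s+\varepsilon)$ together with $\log(1/|c_n|)\ge\tfrac{n\log n}{2+\varepsilon}$ for large $n$ (from the order of $g$), the first formula in \eqref{ordertype} shows $T(g)$ has order $\le2$. Combining this with $T(g)\in\LP_1(\bC)$ and the one-variable form of Theorem \ref{levin}, we may write $T(g)(x)=e^{-\tau x^2}f(x)$ with $\tau\ge0$ and $f\in\LP_1(\bC)$ of genus $\le1$. Since a genus-$\le1$ entire function has order $\le2$ of minimal type, $e^{-\tau x^2}f$ has order $\le2$ and type $\tau$, so \eqref{ordertype} applied to $T(g)$ gives $\limsup_n n^{1/2}|\lambda_nc_n|^{1/n}=(2e\tau)^{1/2}$. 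Comparing with the value just computed forces $\tau=cs^2$, which completes the proof.

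The main obstacle I anticipate is the middle step: establishing $\lambda_n^{1/n}\to s$ as a genuine limit (so that the limsup of the product $\lambda_n^{1/n}\cdot n^{1/2}|c_n|^{1/n}$ factors as desired) and correctly handling the degenerate cases $s=0$ or $c=0$, where both sides vanish. The first step is essentially routine given the bivariate extension of non-negative multiplier sequences recalled in the text, but a rigorous treatment of transcendental $g$ there requires the limiting argument mentioned.
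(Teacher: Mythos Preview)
Your argument is correct and follows essentially the same route as the paper: represent $T(g)=e^{-\tau x^2}f(x)$ with $f$ of genus $\le 1$ via the structure theorem, then identify $\tau$ by computing the order-$2$ type of $T(g)$ through the coefficient formulas \eqref{ordertype}. The only organisational difference is that the paper first records the order $\omega$ and type $\tau_\Phi$ of $\Phi$ and shows $\tau_\Phi=s$ (by the same dominated-convergence estimate you implicitly use for the genus-$0$ product), whereas you bypass this and prove the equivalent statement $\lambda_n^{1/n}\to s$ directly; your lower bound via the positivity of the Taylor coefficients of $e^{sx}\prod_k(1+x/x_k)$ is a clean way to secure the genuine limit, which is exactly what makes the factorisation $\limsup \lambda_n^{1/n}\!\cdot n^{1/2}|c_n|^{1/n}=s\,(2ec)^{1/2}$ legitimate.
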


\begin{proof}
Note that $T(g) \in \LP_1(\bC)$, so it may represented as $T(g)= \exp(-ax^2)f(x)$, where $f(x)\in \LP_1(\bC)$ has genus at most one (by Theorem \ref{levin}). It remains to prove that $a=cs^2$.

Let $\omega$ and $\tau$ be the order and type of \eqref{psrep}, respectively, and let $\rho$ and $\sigma$ be the order and type of $T(g)(x)$, respectively. Then 
$$
\rho = \limsup_{n \to \infty} \frac {n \log n}  {\log \frac{1}{\lambda_n|c_n|}} =  \limsup_{n \to \infty} \left(\frac {\log \frac{n!}{\lambda_n}}{n \log n}+ \frac {\log \frac{1}{|c_n|}}{n \log n}-\frac {\log n!}{n \log n}\right)^{-1} = \frac 1 {\omega^{-1}-1/2}. 
$$
Hence if $\omega<1$,  then $\rho<2$, which verifies \eqref{lac} in this case (since $s=0$).  

Suppose $\omega = 1$. Then $\rho =2$ and 
by \eqref{ordertype},
$$(\sigma e 2)^{{1}/{2}} = \limsup_{n \to \infty}n^{{1}/{2}} (\lambda_n|c_n|)^{{1}/{n}} = \limsup_{n \to \infty}n^{{1}/{2}} \bigg(\frac{\lambda_n}{n!}\bigg)^{{1}/{n}}(n!)^{{1}/{n}}|c_n|^{{1}/{n}}.$$
Since $(n!)^{{1}/{n}} \sim ne^{-1}$,   
$$(\sigma e 2)^{{1}/{2}} = e^{-1}\limsup_{n \to \infty} n^{{1}/{2}} |c_n|^{{1}/{n}}n\bigg(\frac{\lambda_n}{n!}\bigg)^{{1}/{n}} = e^{-1}(ce2)^{{1}/{2}}\tau e,$$
that is,  $\sigma = c\tau^2$. Now 
$$
\tau = s+  \limsup_{r \to \infty} \sum_{k=1}^\infty \frac { \log \left(1+ r/ {x_k} \right)} {r/x_k}x_k^{-1}=s, 
$$
since $\sum_{k=0}^\infty x_k^{-1} <\infty$, $r^{-1}\log (1+r)$ is bounded on $(0,\infty)$ and tends to zero as $r \to \infty$. Hence $\sigma=cs^2$ and the lemma follows. 
\end{proof}
We may now establish when $b \geq 1/4$ in \eqref{h2} and thus finish our proof of Theorem~\ref{piot}. 
Since the order and type with respect to $y$ of \eqref{h1} 
 is $2$ and ${1}/{4}$, it follows by Lemma \ref{ordertypelemma} that $b = s^2/4$. Theorem~\ref{piot} now follows from the following lemma of Craven and Csordas:
 
 \begin{lemma}[Lemma 2.2, \cite{CC2}]\label{CrCs}
Let $\{\lambda_n\}_{n=0}^\infty$ be a non-negative multiplier sequence with exponential generating function given by \eqref{psrep}. 
Then $s \geq 1$ if and only if $\lambda_0 \leq \lambda_1 \leq \lambda_2 \leq \cdots .$
\end{lemma}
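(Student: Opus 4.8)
The plan is to derive both implications directly from the P\'olya--Schur representation \eqref{psrep}, bypassing Theorem~\ref{BB} entirely. Write $\Phi(x)=\sum_{n\ge0}\lambda_nx^n/n!$, which by hypothesis equals $cx^{m}e^{sx}\prod_{k=1}^{\omega}(1+x/x_k)$ as in \eqref{psrep}, and assume $\Phi\not\equiv0$. Since each of $x^{m}$, $e^{sx}$ (here $s\ge0$) and $1+x/x_k$ (here $x_k>0$) has non-negative Taylor coefficients, expanding the product shows $\lambda_n=c\cdot(\text{a non-negative number})$ for every $n$, forcing $c>0$. The single computation that both directions rest on is the identity
$$\sum_{n=0}^{\infty}(\lambda_{n+1}-\lambda_n)\frac{x^n}{n!}=\Phi'(x)-\Phi(x)=e^{x}\,\frac{d}{dx}\!\big(e^{-x}\Phi(x)\big).$$

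For sufficiency, I would start from $s\ge1$ and observe that $e^{-x}\Phi(x)=cx^{m}e^{(s-1)x}\prod_{k=1}^{\omega}(1+x/x_k)$ is a product of entire functions each with non-negative Taylor coefficients --- now using $s-1\ge0$, and noting that for $\omega=\infty$ the infinite product is a locally uniform limit of finite products, hence has coefficients that are limits of non-negative numbers. Therefore $e^{-x}\Phi(x)$, then its derivative, then $e^{x}\frac{d}{dx}(e^{-x}\Phi(x))=\Phi'(x)-\Phi(x)$ all have non-negative Taylor coefficients, and the displayed identity gives $\lambda_{n+1}\ge\lambda_n$ for all $n$.

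For necessity, I would assume $\lambda_0\le\lambda_1\le\cdots$ and fix $N$ with $\lambda_N>0$. Then $\lambda_n\ge\lambda_N$ for $n\ge N$, so for $x>0$
$$\Phi(x)\ge\lambda_N\sum_{n\ge N}\frac{x^n}{n!}=\lambda_N\Big(e^{x}-\sum_{n=0}^{N-1}\frac{x^n}{n!}\Big),$$
which yields $\liminf_{x\to\infty}e^{-x}\Phi(x)\ge\lambda_N>0$ and hence $\liminf_{x\to\infty}x^{-1}\log\Phi(x)\ge1$. On the other hand, \eqref{psrep} gives
$$x^{-1}\log\Phi(x)=\frac{\log c}{x}+\frac{m\log x}{x}+s+\frac1x\sum_{k=1}^{\omega}\log\!\Big(1+\frac{x}{x_k}\Big),$$
and the last term tends to $0$ as $x\to\infty$: given $\varepsilon>0$, choose $K$ with $\sum_{k>K}1/x_k<\varepsilon$, bound the finitely many terms with $k\le K$ by $O(\log x)$ and the remaining tail by $\sum_{k>K}x/x_k<\varepsilon x$. (This is precisely the minimal-type estimate already carried out in the proof of Lemma~\ref{ordertypelemma}, where it shows the type of $\Phi$ equals $s$.) Thus $x^{-1}\log\Phi(x)\to s$, and comparing with the previous bound gives $s\ge1$.

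I expect no genuine obstacle: the sufficiency direction is a positivity-of-coefficients observation, and the necessity direction compares the growth of $\Phi$ along the positive real axis (where $\Phi$ attains its maximum modulus, since its coefficients are non-negative) with the growth dictated by \eqref{psrep}. The one technical ingredient is the minimal-type estimate $x^{-1}\sum_k\log(1+x/x_k)\to0$, which is elementary and already appears in the paper. For a clean writeup one should also record the standing assumptions $\Phi\not\equiv0$ (so that $N$ exists and $c>0$) and that it is $\Phi$, not $\Phi(-x)$, which carries the representation \eqref{psrep} --- a representation of $\Phi(-x)$ of that shape would force the $\lambda_n$ to alternate in sign, contrary to non-negativity.
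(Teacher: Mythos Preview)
The paper does not supply a proof of this lemma; it is quoted verbatim from Craven and Csordas \cite{CC2} and used as a black box to finish the proof of Theorem~\ref{piot}. So there is no in-paper argument to compare your proposal against.

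Your argument is correct in both directions. The sufficiency part is clean: once $c>0$ is observed, the factorization $e^{-x}\Phi(x)=cx^{m}e^{(s-1)x}\prod_k(1+x/x_k)$ has non-negative Taylor coefficients when $s\ge1$, and the identity $\Phi'(x)-\Phi(x)=e^{x}\frac{d}{dx}(e^{-x}\Phi(x))$ then forces every $\lambda_{n+1}-\lambda_n\ge0$. For necessity, the growth comparison is sound: the lower bound $\Phi(x)\ge\lambda_N(e^{x}-\text{poly})$ along the positive axis gives $\liminf_{x\to\infty}x^{-1}\log\Phi(x)\ge1$, while the product representation gives $\lim_{x\to\infty}x^{-1}\log\Phi(x)=s$, the tail estimate $x^{-1}\sum_k\log(1+x/x_k)\to0$ being exactly the type computation already recorded in the proof of Lemma~\ref{ordertypelemma}. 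The caveats you flag (assuming $\Phi\not\equiv0$, and that it is $\Phi$ rather than $\Phi(-x)$ that carries the representation \eqref{psrep}) are the right ones and are harmless under the non-negativity hypothesis.
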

\hfill \ensuremath{\Box}
\medskip 

\noindent
{\bf Acknowledgements.} We thank the two anonymous referees for several valuable suggestions that improved the exposition.

\end{document}